\newtheorem{definition}{Definition}
\newtheorem{theorem}{Theorem}
\newtheorem{lemma}{Lemma}
\newtheorem{corollary}{Corollary}
\newcommand{\MM}{\mathcal{M}}
\tikzset{dot/.style={draw,shape=circle,fill=black,scale=.2}}
   \tikzset{*--*/.style={decoration={
   markings,
   mark=at position 0 with { \draw [fill] (0,0) circle [radius=1pt];},
   mark=at position 1 with { \draw [fill] (0,0) circle [radius=1pt];}},postaction={decorate}}}
\author{Megan A. Martinez \affiliationmark{1} \and Manda Riehl\affiliationmark{2}
 }
\title{A bijection between the set of nesting-similarity classes and L\&P matchings} 
\affiliation{
	Department of Mathematics, Ithaca College, Ithaca, NY\\
    Department of Mathematics, Rose-Hulman Institute of Technology}
\keywords{perfect matchings, nesting-similarity, bijective combinatorics}
\begin{document}
\publicationdetails{19}{2017}{2}{1}{3291}
\maketitle

\begin{abstract}
Matchings are frequently used to model RNA secondary structures; however, not all matchings can be realized as RNA motifs. One class of matchings, called the L \& P matchings, is the most restrictive model for RNA secondary structures in the Largest Hairpin Family (LHF). The L \& P matchings were enumerated in $2015$ by Jefferson, and they are equinumerous with the set of nesting-similarity classes of matchings, enumerated by Klazar. We provide a bijection between these two sets. This bijection preserves noncrossing matchings, and preserves the sequence obtained reading left to right of whether an edge begins or ends.
\end{abstract}


Ribonucleic acid (RNA) is an essential molecule found in the cells of all living things. Usually, RNA is formed by a string of nucleotides which folds over on itself, creating secondary bonds. The structure of these secondary bonds is a topic of great interest and has been studied from a biological and mathematical perspective \cite{Condon, Jefferson}. Mathematically, RNA secondary structures can be modeled by considering each nucleotide as a vertex and bonds between nucleotides that are not part of the RNA backbone as edges. Each vertex is incident to at most one edge and thus the graph obtained is a matching.

We represent these matchings as $2n$ points drawn along a horizontal line (the backbone) and arcs drawn between pairs of points represent the nucleotide bonds. For simplicity, we assume all matchings are \emph{complete} (i.e., all vertices are incident to an edge), and therefore contain $n$ edges. RNA often has nucleotides that are not bonded, but these structures can be reconstructed by adding any number of isolated vertices to a complete matching. We notate the set of complete matchings with $n$ edges by $\MM(n)$.

For a matching, $M$, we denote the set of edges $E(M)$ and label these edges with $[n]$ in increasing order from left to right by their left endpoints. For the edge labeled $i$, we write $i=(i_1,i_2)$ where $i$ represents the label of the edge and $i_1$, $i_2$ represent the position of the left and right endpoints of the edge, respectively.  A pair of edges $i=(i_1,i_2)$ and $j=(j_1, j_2)$ are said to be \emph{nested} if $i_1<j_1<j_2<i_2$ and \emph{crossing} if $i_1<j_1<i_2<j_2$. For a matching $M \in \MM(n)$, let $ne(M)$ and $cr(M)$ denote the number of pairs of nested edges and crossing edges in $M$, respectively. 
Additionally, the edges $i=(i_1,i_2)$ and $j=(j_1, j_2)$ are said to form a \emph{hairpin} if $j_1=i_1+1$ and $j_2=i_2+1$ (Figure~\ref{hairpin}). A nested sequence of edges that can be drawn above the backbone is called a \emph{ladder} (Figure~\ref{ladder4}).

\begin{figure}[H]
\centering
\begin{subfigure}[b]{0.15\textwidth}
\centering
\begin{tikzpicture}[scale=0.45] 
\foreach \x in {0,...,3}
	{
	\node[dot] at (\x,0)(\x){};
	\draw (\x,0)node[below]{\x};
	}

\draw(0)  to [bend left=90] (2);
\draw(1)  to [bend left=90] (3);

\end{tikzpicture}
\caption{} \label{hairpin}
\end{subfigure}
\begin{subfigure}[b]{0.35\textwidth}
\centering
\begin{tikzpicture}[scale=0.45] \label{ladder2}
\foreach \x in {0,...,7}
	{
	\node[dot] at (\x,0)(\x){};
	\draw (\x,0)node[below]{\x};
	}

\draw(0)  to [bend left=90] (7);
\draw(1)  to [bend left=90] (6);
\draw(2)  to [bend left=90] (5);
\draw(3)  to [bend left=90] (4);

\end{tikzpicture}
\caption{} \label{ladder4}
\end{subfigure}
\begin{subfigure}[b]{0.4\textwidth}
\centering
\begin{tikzpicture}[scale=0.45]
\foreach \i in {0,...,13}
	{
		\node[dot] at (\i,0)(\i){};
	}
	
\draw(0)  to [bend left=55]  node[midway,above]{1} (9);
\draw(1)  to [bend left=55] node[midway,above]{2}(6);
\draw(2)  to [bend left=55] node[midway,above]{3}(3);
\draw(4)  to [bend left=55] node[midway,above]{4}(13);
\draw(5)  to [bend left=55] node[midway,right=8pt, above]{5}(10);
\draw(7)  to [bend left=55] node[midway,above]{6}(8);
\draw(11)  to [bend left=55] node[midway,above]{7}(12);
\end{tikzpicture}
\caption{} \label{LPMatching}
\end{subfigure}
\caption{The left matching is a hairpin, the middle matching is a ladder of four edges, and the right matching is an example of an L \& P matching with edges labeled by left endpoint.} \label{ladder3}
\end{figure}
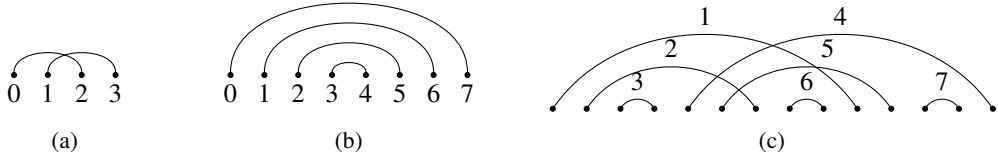

Matchings that contain no crossing pairs of edges are called \emph{noncrossing matchings}. The number of noncrossing matchings with $n$ edges is well-known to be counted by the $n$th Catalan number, $C_n=\frac{1}{n+1}\binom{2n}{n}$, and has been studied in several contexts, including pattern avoidance \cite{Bloom}.

For each matching, we can examine the vertices from left to right and list whether vertices are left or right endpoints of an edge.  For example, for the matching in Figure \ref{example of matching}, this sequence is LLRLLRRRLR; we call this the \emph{LR-sequence} of a matching. LR-sequences are in bijection with Dyck paths, and each noncrossing matching has a distinct LR-sequence. For any $M \in \MM(n)$, define $nc(M)$ to be the noncrossing matching with the same LR-sequence as $M$.

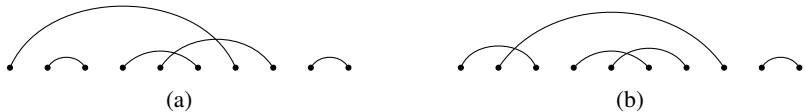
\begin{figure}[b]
\centering
	\begin{subfigure}[b]{0.4\textwidth}
	\centering
		\begin{tikzpicture}[scale=0.5]
		\foreach \i in {0,...,9}
			{
				\node[dot] at (\i,0)(\i){};
			}
	
		\draw(0)  to [bend left=65] (6);
		\draw(1)  to [bend left=55] (2);
		\draw(3)  to [bend left=45] (5);
		\draw(4)  to [bend left=55] (7);
		\draw(8)  to [bend left=55] (9);

		\end{tikzpicture}
		\caption{}
		\label{example of matching}
	\end{subfigure}
	\begin{subfigure}[b]{0.4\textwidth}
	\centering
	\begin{tikzpicture}[scale=0.5]
		\foreach \i in {0,...,9}
			{
				\node[dot] at (\i,0)(\i){};
			}
	
		\draw(0)  to [bend left=65] (2);
		\draw(1)  to [bend left=55] (7);
		\draw(3)  to [bend left=45] (5);
		\draw(4)  to [bend left=55] (6);
		\draw(8)  to [bend left=55] (9);

		\end{tikzpicture}
		\caption{} \label{nesting similarity b}
	\end{subfigure}
	\caption{Two nesting-similar matchings. Each matching has LR-sequence LLRLLRRRLR and two pairs of nested edges.}
	\label{nesting similarity}
\end{figure}

In order to model RNA structures that contain crossings, different families of matchings have been studied. In this paper, we focus on the family of L \& P matchings. The L \& P matchings were first rigorously defined by Condon et al.\ \cite{Condon} and this definition was later refined by Jefferson \cite{Jefferson}. Each matching in the L \& P family can be constructed inductively by starting from either a hairpin or a single edge, and either a) inflating an edge by a ladder, or b) inserting a noncrossing matching into an L \& P matching \cite{Jefferson}.  For an example of an L \& P matching, see Figure~\ref{LPMatching}.  The counting sequence for the number of L \& P matchings with $n$ edges begins 1, 3, 12, 51, 218, 926, 16323, 67866, 280746 and is given by the formula $2\cdot 4^{n-1}-\frac{3n-1}{2n+2}\binom{2n}{n}$ \cite{Jefferson}.

It was noted by Jefferson that L \& P matchings with $n$ edges, which we denote $\mathcal{LP}_n$, are equinumerous to the equivalence classes on matchings with $n$ edges determined by the nesting-similarity equivalence, $\sim_{ne}$, defined by Klazar \cite{Klazar}. We say that two matchings $M, N \in \mathcal{M}(n)$ are \emph{nesting-similar}, and write $M \sim_{ne} N$, if and only if $ne(M)=ne(N)$ and $M,N$ have the same LR-sequence. An example of this equivalence is shown in Figure \ref{nesting similarity}. Klazar showed that there are $2\cdot 4^{n-1}-\frac{3n-1}{2n+2}\binom{2n}{n}$ nesting-similarity equivalence classes for matchings in $\mathcal{M}(n)$; however, no explicit bijection was discovered connecting L \& P matchings to nesting-similarity classes. Our main result is to construct a bijective correspondence between these two structures.

Klazar \cite{Klazar} utilized a different, but equivalent, definition for nesting-similarity in his work. He examined the \emph{tree of matchings} where the vertex set is the infinite set of matchings, $\bigcup_{n=0}^{\infty} \MM(n)$. Matchings are connected if one can be obtained from the other by inserting a new edge whose left endpoint occurs earliest. With this construction, two matchings are said to be nesting-similar if we can record the number of nestings for the children of each matching, which have one added edge, and obtain the same multiset. It is straightforward to show that this definition is equivalent to the definition above.

In his paper, Klazar shows a correspondence between his nesting-similarity matchings and tunnel pairs in Dyck paths. Our bijection between L \& P matchings and the set of nesting-similarity classes will be a composition of two bijections. The composed mapping will pass through the set of nestings in noncrossing matchings, which correspond to tunnel pairs in Dyck paths. Define $\mathcal{NCN}_n = \{(M,a,b) \mid  M\in \mathcal{M}(n) \text{ noncrossing};a=b=0 \text{ or \allowbreak edges }  a<b\text{ are nested} \allowbreak \text{ in }M\}$; so, $\mathcal{NCN}_n$ is the set of noncrossing matchings with a chosen nested pair of edges ($a=b=0$ indicates no nested pair has been indicated). 

In Section~\ref{sec:LP}, we define a bijection between L \& P matchings and $\mathcal{NCN}_n$, and in Section~\ref{sec:nes}, we define a bijection between nesting-similarity classes and $\mathcal{NCN}_n$.Thus We show that both L \& P matchings and nesting-similarity classes are equinumerous to $\mathcal{NCN}_n$.

\section{L \& P matchings and noncrossing matchings} \label{sec:LP}

The process of constructing an L \& P matching implies that such a matching contains a crossing exactly if the matching can be built inductively from a hairpin.  As a result, any L \& P matching that contains a crossing will have all crossings occur in an inflated hairpin. Given a matching, $M$, we will label edges by left endpoint, as in Figure~\ref{LPMatching}. In this Figure, edges 1, 2 crossing edges 4, 5 comprise an inflated hairpin. Below we provide a precise definition of this structure.

\begin{definition}
A \emph{maximal inflated hairpin} in an L \& P matching is two sets of edges $A=\{a_1,a_2,\ldots,a_k\}$ and $B=\{b_1,b_2,\ldots,b_{\ell}\}$ such that
\begin{enumerate}
\item every pair of edges in $A$ and every pair of edges in $B$ is nested,
\item for every $a_i \in A$, $a_i$ crosses every edge in $B$, and
\item every crossing in $M$ occurs between edges in $A$ and $B$.
\end{enumerate}
We let $A$ be the set of edges with smaller labels (the left side of the inflated hairpin), and we say that $M$ contains the inflated hairpin $(A,B)$.
\end{definition}

Any L \& P matching consists of a maximal inflated hairpin with noncrossing matchings inserted below the hairpin as in Figure \ref{fig:structure}. It is possible for the inflated hairpin to be empty, yielding a noncrossing matching.
For example, the matching in Figure~\ref{example of matching} is L \& P: the first and third edges crossing the fourth edge form an inflated hairpin. However, the matching in Figure~\ref{nesting similarity b} is not L \& P, since the first four edges are all involved in crossings, but these four edges do not form an inflated hairpin.

Notice that every edge not in the inflated hairpin of an L \& P matching must begin and end between two vertices that are adjacent in the inflated hairpin, otherwise it would be part of an inflated hairpin itself. This fact will be used in our bijection.

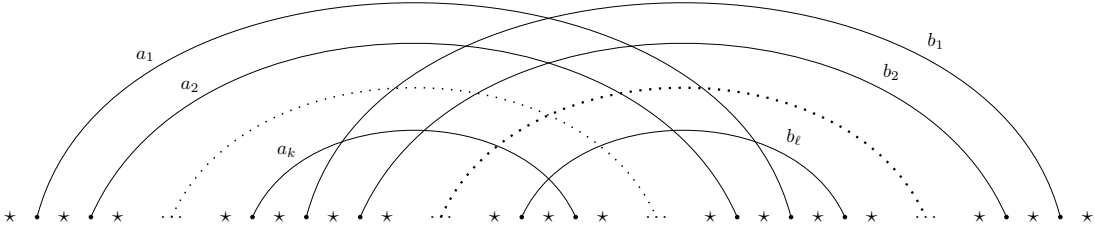
\begin{figure}[ht]
\resizebox{1\textwidth}{!}{
\begin{tikzpicture}[scale=1]
\node at (0,0){{\large$\star$}};
\node[dot] at (0.5,0)(a1){}; 
\node at (1,0){{\large$\star$}}; 
\node[dot] at (1.5,0)(a2){}; 
\node at (2,0){{\large$\star$}}; 
\node at (3,0)(1){$\ldots$};
\node at (4,0){{\large$\star$}};
\node[dot] at (4.5,0)(ak){}; 
\node at (5,0){{\large$\star$}};
\node[dot] at (5.5,0)(b1){}; 
\node at (6,0){{\large$\star$}}; 
\node[dot] at (6.5,0)(b2){}; 
\node at (7,0){{\large$\star$}}; 
\node at (8,0)(2){$\ldots$};
\node at (9,0){{\large$\star$}};
\node[dot] at (9.5,0)(bl){}; 
\node at (10,0){{\large$\star$}};
\node[dot] at (10.5,0)(ak2){}; 
\node at (11,0){{\large$\star$}};
\node at (12,0)(3){$\ldots$};
\node at (13,0){{\large$\star$}};
\node[dot] at (13.5,0)(a22){}; 
\node at (14,0){{\large$\star$}};
\node[dot] at (14.5,0)(a12){}; 
\node at (15,0){{\large$\star$}};
\node[dot] at (15.5,0)(bl2){}; 
\node at (16,0){{\large$\star$}};
\node at (17,0)(4){$\ldots$};
\node at (18,0){{\large$\star$}};
\node[dot] at (18.5,0)(b22){}; 
\node at (19,0){{\large$\star$}};
\node[dot] at (19.5,0)(b12){}; 
\node at (20,0){{\large$\star$}};

		\draw(a1)  to [bend left=75]node[near start,left=8pt]{$a_1$} (a12);
		\draw(a2)  to [bend left=66]node[near start,left=8pt]{$a_2$} (a22);
		\draw(ak)  to [bend left=65]node[near start,left=8pt]{$a_k$} (ak2);
		\draw(b1)  to [bend left=75]node[near end,right=8pt,above]{$b_1$} (b12);
		\draw(b2)  to [bend left=66]node[near end,right=8pt,above]{$b_2$} (b22);
		\draw(bl)  to [bend left=65]node[near end,right=8pt,above]{$b_{\ell}$} (bl2);

		\draw[loosely dotted,very thick](8,0)  to [bend left=66](17,0);
		\draw[loosely dotted, thick](3,0)  to [bend left=66](12,0);

\end{tikzpicture}
}
\caption{The structure of an L \& P matching; Each $\star$ indicates a position where a noncrossing matching may be inserted. The edges $a_1, a_2,\ldots, a_k$ cross the edges $b_1,b_2,\ldots,b_{\ell}$ to form the inflated hairpin.} \label{fig:structure}
\end{figure}

\begin{lemma} \label{lem:LP}
Let $M \in \mathcal{LP}_n$ such that $M$ contains an inflated hairpin $(A, B)$. Then the right endpoints of the inflated hairpin appear in the order $(a_k,a_k-1,\ldots,a_1,b_{\ell},b_{\ell}-1,\ldots,b_1)$ and can be rearranged to $(b_{\ell},b_{\ell}-1,\ldots,b_1,a_k,a_k-1,\ldots,a_1)$ to construct $nc(M)$. Additionally, every pair of edges in $A \cup B$ are nested in $nc(M)$.
\end{lemma}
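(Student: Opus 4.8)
The plan is to read off the structure forced by the three conditions defining a maximal inflated hairpin, and then to track exactly what the canonical map $nc(\cdot)$ does to that structure.

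First I would pin down the left-to-right order of the hairpin endpoints. Since the edges of $A$ are labelled by increasing left endpoint and are pairwise nested, $a_1$ is the outermost edge and $a_1 \supset a_2 \supset \cdots \supset a_k$; hence the left endpoints of $A$ occur in the order $a_1, a_2, \ldots, a_k$ while the right endpoints occur in the reverse order $a_k, a_{k-1}, \ldots, a_1$, the innermost edge closing first. The identical argument applies to $B$. Because $A$ consists of the edges with smaller labels and each $a_i$ crosses each $b_j$, the crossing condition $a_{i,1} < b_{j,1} < a_{i,2} < b_{j,2}$ holds for every $i,j$; taking $i=1$, $j=\ell$ gives $a_{1,2} < b_{\ell,2}$, so the largest right endpoint of $A$ precedes the smallest right endpoint of $B$. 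Combining these facts shows that the right endpoints of the hairpin appear precisely in the order $a_k, a_{k-1}, \ldots, a_1, b_\ell, b_{\ell-1}, \ldots, b_1$, which is the first assertion.

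Next I would analyze $nc(M)$, the unique noncrossing matching with the same LR-sequence, realized by the standard stack (bracket) pairing that matches each right endpoint to the most recently opened unmatched left endpoint. The key reduction is that every edge outside the hairpin begins and ends between two adjacent hairpin vertices, so each such gap carries a complete noncrossing matching whose LR-subsequence is a Dyck word, i.e.\ balanced with every prefix nonnegative. Consequently, when the stack processes a gap it opens and closes exactly the inserted edges among themselves and returns to its previous state, leaving the hairpin brackets untouched; since each inserted piece is already noncrossing it is reproduced verbatim. Thus the $k+\ell$ hairpin left endpoints, read in the order $a_1, \ldots, a_k, b_1, \ldots, b_\ell$, are all pushed before any hairpin right endpoint is processed, and the left-endpoint positions (hence the labels) are unchanged.

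It then remains to compute the induced pairing. With all $m=k+\ell$ hairpin left endpoints on the stack (so $b_\ell$ on top and $a_1$ at the bottom) and the $m$ hairpin right-endpoint positions read left to right, the stack pairing nests them completely: the $j$-th right-endpoint position is matched to the $j$-th element popped, namely $b_\ell, b_{\ell-1}, \ldots, b_1, a_k, a_{k-1}, \ldots, a_1$ in turn. This is exactly the claimed rearrangement of the right endpoints, and since the resulting edges satisfy $a_1 \supset a_2 \supset \cdots \supset a_k \supset b_1 \supset \cdots \supset b_\ell$, every pair of edges in $A \cup B$ is nested in $nc(M)$, establishing the remaining two assertions. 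I expect the main obstacle to be the middle step: justifying rigorously that $nc$ acts only by rearranging the hairpin's right endpoints and fixes the inserted matchings. This hinges on the Dyck-word property of each gap, which I would isolate and prove as a short sub-lemma before invoking the stack description of $nc$.
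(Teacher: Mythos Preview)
Your argument is correct, but it takes a genuinely different route from the paper's. The paper does not compute $nc(M)$ algorithmically at all: after noting the order of the right endpoints, it simply \emph{performs} the stated rearrangement, observes that every pair of edges in $A\cup B$ becomes nested and that no edge outside the hairpin can acquire a new crossing (since such edges lie entirely between two adjacent hairpin vertices), and concludes that the resulting matching is noncrossing. It then invokes the fact that noncrossing matchings are in bijection with LR-sequences, so this noncrossing matching with the same LR-sequence as $M$ must be $nc(M)$. By contrast, you compute $nc(M)$ directly via the stack (bracket) pairing, using the Dyck-word property of each gap to argue that the inserted pieces are processed independently and the hairpin endpoints are matched as a single nested block. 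Your approach is more explicit and constructive, and it makes transparent exactly how the hairpin endpoints get re-paired; the paper's approach is shorter and avoids the sub-lemma you flag as the main obstacle, trading the stack computation for a one-line appeal to uniqueness.
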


\begin{proof}
Since we defined $A$ to contain edges with smaller labels than $B$, the result that the right endpoints are in the order $(a_k,a_k-1,\ldots,a_1,b_{\ell},b_{\ell}-1,\ldots,b_1)$ follows immediately. Now consider rearranging the right endpoints such that they appear in the order $(b_{\ell},b_{\ell}-1,\ldots,b_1,a_k,a_k-1,\ldots,a_1)$. It is easy to see that every pair of edges in $A \cup B$ is now nested and that no additional crossings can be created by this rearrangement.  Therefore the result is a noncrossing matching. Notice that, since noncrossing matchings are in direct bijection with LR-sequences, the resulting matching is in fact $nc(M)$.
\end{proof}

For an example of this conversion of an L \& P matching with an inflated hairpin into a noncrossing matching, see Figure \ref{fig:LPMatching}.  The results of Lemma \ref{lem:LP} will be useful as we define our bijection from $\mathcal{LP}_n$ to $\mathcal{NCN}_n$.

\begin{definition}
Define $\phi: \mathcal{LP}_n \rightarrow \mathcal{NCN}_n$ where,
\[\phi(M) = \begin{cases} (M,0,0), & \text{if $M$ noncrossing}, \\ (nc(M),\max(A),\max(B)), & \text{otherwise}. \end{cases}\]
where $(A,B)$ is the inflated hairpin of $M$.
\end{definition}

The results of Lemma \ref{lem:LP} imply that $\max(A), \max(B)$ are a nested pair of edges in $nc(M)$, and therefore $\phi$ is well-defined. All that remains is to show that $\phi$ is in fact a bijection.

\begin{theorem}\label{phi}
The mapping $\phi: \mathcal{LP}_n \rightarrow \mathcal{NCN}_n$ is a bijection.
\end{theorem}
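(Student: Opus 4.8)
The plan is to prove that $\phi$ is a bijection by constructing an explicit inverse map $\psi: \mathcal{NCN}_n \to \mathcal{LP}_n$ and verifying that $\psi \circ \phi$ and $\phi \circ \psi$ are both the identity. This is cleaner than separately proving injectivity and surjectivity, because the structure theorem for L\&P matchings (Figure~\ref{fig:structure}) essentially tells us how to reconstruct $M$ from the data $(nc(M), \max(A), \max(B))$.

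First I would handle the noncrossing case: if $M$ is noncrossing then $\phi(M) = (M,0,0)$, and I would define $\psi(N,0,0) = N$, noting that a noncrossing matching is trivially an L\&P matching with empty inflated hairpin. The substance is the crossing case. Given a triple $(N, a, b) \in \mathcal{NCN}_n$ with $a < b$ nested in $N$, I would reconstruct the inflated hairpin as follows. Because $a$ and $b$ are nested in $N$, there is a maximal ladder of mutually nested edges containing them; Lemma~\ref{lem:LP} identifies the two blocks $A$ and $B$ (with $\max(A)=a$, $\max(B)=b$) as consecutive runs of this ladder whose right endpoints, when the two runs are swapped, turn the crossing hairpin into the nested configuration of $N$. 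Concretely, $A$ consists of the edges of the ladder up through $a$, and $B$ consists of the edges from the successor of $a$ up through $b$. I would then define $\psi(N,a,b)$ to be the matching obtained from $N$ by reversing the rearrangement of Lemma~\ref{lem:LP}: leave all left endpoints fixed and swap the right-endpoint order of the $A$-block and $B$-block back to $(a_k,\ldots,a_1,b_\ell,\ldots,b_1)$, producing the inflated hairpin $(A,B)$.

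The two key verifications are then: (i) $\psi(N,a,b)$ is a genuine L\&P matching, which follows because swapping those endpoints introduces exactly the crossings between $A$ and $B$ and no others (every other edge of $N$ begins and ends between adjacent hairpin vertices, so it is an inserted noncrossing matching, matching the structure in Figure~\ref{fig:structure}); and (ii) the round trips are identities. For $\psi(\phi(M))=M$, I would observe that $\phi$ records precisely the LR-sequence (via $nc(M)$) together with the maxima of the two hairpin blocks, and Lemma~\ref{lem:LP} shows this data determines the right-endpoint ordering of $M$ uniquely; since left endpoints are fixed by the LR-sequence, $M$ is recovered. For $\phi(\psi(N,a,b))=(N,a,b)$, I would check that $nc(\psi(N,a,b))=N$ (the LR-sequence is preserved by the endpoint swap) and that the inflated hairpin of $\psi(N,a,b)$ has maxima exactly $a$ and $b$.

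The main obstacle is pinning down that the blocks $A$ and $B$ are \emph{uniquely and correctly recoverable} from the single nested pair $(\max(A),\max(B))$ — that is, showing the chosen pair of maxima, together with the noncrossing structure of $N$, forces the full sets $A$ and $B$ rather than merely some valid hairpin. I expect this to hinge on the maximality condition in the definition of the inflated hairpin together with the observation (stated just before the definition of $\phi$) that every non-hairpin edge lies strictly between two adjacent hairpin vertices; this rules out ambiguity by forcing $A$ and $B$ to be exactly the maximal mutually-nested runs terminating at $a$ and $b$. Once uniqueness of the decomposition is established, well-definedness of $\psi$ and the two identities follow by the routine endpoint bookkeeping sketched above.
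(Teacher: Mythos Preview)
Your approach is correct and is essentially the same as the paper's: both construct an explicit inverse by, given $(N,a,b)$, recovering $A$ as the set of edges of $N$ containing (i.e., nested around and with label at most) $a$, and $B$ as the edges strictly inside $a$ that contain $b$, then swapping the $A$- and $B$-blocks of right endpoints to rebuild the inflated hairpin. The paper's proof is terser---it declares the verification that $M'$ is L\&P with the correct maximal hairpin ``straightforward''---whereas you spell out the two round-trip checks and correctly flag the uniqueness of the $(A,B)$ recovery as the one nontrivial point.
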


\begin{proof}
We will construct the inverse of $\phi$. First note that $\phi^{-1}(M,0,0)=M$.

Now consider some noncrossing matching $M$ with nested pair of edges $(a,b)$. Let $A=\{a_i \in E(M) \mid (a_i,a)\text{ are nested and } a_i\leq a\text{ such that }a_1<a_2<\cdots<a_k=a\}$ and $B=\{b_i \in E(M) \mid (b_i,b)\text{ are nested and } a<b_i\leq b\text{ such that }b_1<b_2<\cdots<b_{\ell}=b\}$. Then, since edges are labeled by left endpoints and $M$ is noncrossing, the right endpoints of $A \cup B$ appear in the order $(b_{\ell},b_{\ell}-1,\ldots,b_1,a_k,a_k-1,\ldots,a_1)$.

Let $M'$ be the resulting matching when the right endpoints of $A \cup B$ are reordered to appear as $(a_k,a_k-1,\ldots,a_1,b_{\ell},b_{\ell}-1,\ldots,b_1)$. It is straightforward to show that $M'$ is an L \& P matching with inflated hairpin $(A,B)$ where $(\max(A),\max(B))=(a,b)$. It follows that $\phi^{-1}(M,a,b)=M'$. Therefore $\phi$ is a bijection, as desired.
\end{proof}

\begin{figure}[ht]
\centering
\begin{tikzpicture}[scale=0.5]
\foreach \i in {0,...,13}
	{
		\node[dot] at (\i,0)(\i){};
	}
	
\draw(0)  to [bend left=55]  node[midway,above]{1} (9);
\draw(1)  to [bend left=55] node[midway,above]{2}(6);
\draw(2)  to [bend left=55] node[midway,above]{3}(3);
\draw(4)  to [bend left=55] node[midway,above]{4}(13);
\draw(5)  to [bend left=55] node[midway,right=8pt, above]{5}(10);
\draw(7)  to [bend left=55] node[midway,above]{6}(8);
\draw(11)  to [bend left=55] node[midway,above]{7}(12);
\end{tikzpicture} \hspace{1cm}
\begin{tikzpicture}[scale=0.5]
\foreach \i in {0,...,13}
	{
		\node[dot] at (\i,0)(\i){};
	}
	
\draw(0)  to [bend left=55]  node[midway,above]{1} (13);
\draw(1)  to [bend left=55] node[midway,above]{2}(10);
\draw(2)  to [bend left=45] node[midway,above]{3}(3);
\draw(4)  to [bend left=60] node[midway,above]{4}(9);
\draw(5)  to [bend left=45] node[midway,above]{5}(6);
\draw(7)  to [bend left=45] node[midway,above]{6}(8);
\draw(11)  to [bend left=45] node[midway,above]{7}(12);
\end{tikzpicture}
\caption{On the left is an L \& P matching with labeled edges. In this matching, the inflated hairpin contains edges 1, 2 crossing edges 4, 5. On the right is the corresponding noncrossing matching obtained by swapping the right endpoints of edges in the inflated hairpin.}
\label{fig:LPMatching}
\end{figure}
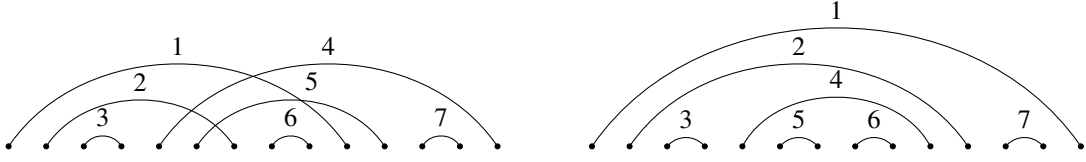

\section{Nesting-Similarity Classes and Noncrossing Matchings} \label{sec:nes}

Recall that $M \sim_{ne}N$ if and only if $ne(M)=ne(N)$ and $M,N$ have the same LR-sequence. Klazar showed that the nesting-similarity classes are equinumerous with tunnel pairs in Dyck Paths through the use of transpositions that swap the endpoints of nestings with minimal width. Using this map iteratively, for any LR-sequence, if $M$ is the corresponding noncrossing matching, then for every $i$ where $0\leq i \leq ne(M)$, Klazar proved that there exists some matching with the same LR-sequence and $i$ pairs of nested edges. Notice that the noncrossing matchings contain the maximum possible number of nestings for a particular LR-sequence, so Klazar's result encompasses all the nesting-similarity classes.

We define a bijection between nesting-similarity classes and the set $\mathcal{NCN}_n$ which explicitly defines a representative for each equivalence class. Let $nep(M)$ denote the list of nested pairs of edges in $M$ ordered lexicographically by prioritizing the second element. For example, $$nep(M)=\{(1,2), (1,3), (2,3), (1,4), (2,4), (1,5), (2,5), (4,5), (1,6), (2,6), (4,6), (1,7)\}$$ for the noncrossing matching of Figure~\ref{fig:LPMatching}. Let $rperm(M)$ be the order in which the right endpoints of edges appear. For example, $rperm(M)=3564271$. We define $rperm$ for all matchings, not necessarily noncrossing. However, when we have noncrossing matchings, rperm is useful for identifying nestings.

\begin{lemma} \label{lem:rperm}
In a noncrossing matching $M$, edges $a,b$ are nested, with $a<b$, if and only if $b$ appears before $a$ in $rperm(M)$.
\end{lemma}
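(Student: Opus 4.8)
The plan is to argue by a direct case analysis, using the two structural facts we are handed for free: edges are labeled by left endpoint (so $a<b$ means exactly $a_1<b_1$), and $M$ is noncrossing (so no pair of edges can satisfy the crossing inequality $i_1<j_1<i_2<j_2$). The whole lemma should reduce to unwinding the definition of $rperm(M)$, which records the order in which right endpoints occur along the backbone; thus ``$b$ appears before $a$ in $rperm(M)$'' is, by definition, simply the statement $b_2<a_2$.

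First I would fix two edges $a=(a_1,a_2)$ and $b=(b_1,b_2)$ with $a<b$, which by the labeling convention means $a_1<b_1$. The key observation is that in a noncrossing matching only two relative positions are possible for a pair of edges whose left endpoints already satisfy $a_1<b_1$: either they are nested, $a_1<b_1<b_2<a_2$, or they lie disjointly side by side, $a_1<a_2<b_1<b_2$. The crossing arrangement $a_1<b_1<a_2<b_2$ is excluded precisely because $M$ is noncrossing, and no other interleaving of four distinct points is consistent with $a_1<b_1$.

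With this dichotomy in hand, the biconditional follows immediately by reading off the right-endpoint order in each case. In the nested case we have $b_2<a_2$, so $b$ precedes $a$ in $rperm(M)$; in the disjoint case we have $a_2<b_2$, so $a$ precedes $b$. Since these two cases are exhaustive and mutually exclusive, nestedness of $a$ and $b$ is equivalent to $b_2<a_2$, which is equivalent to $b$ appearing before $a$ in $rperm(M)$.

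I do not expect a genuine obstacle here; the statement is essentially a restatement of the definitions once the noncrossing hypothesis has collapsed the possibilities to ``nested or disjoint.'' The only point that deserves care is making the exhaustiveness of the two-case split explicit—i.e., confirming that the labeling convention $a_1<b_1$ together with the exclusion of the crossing pattern really does force one of the two listed orderings—so that both directions of the ``if and only if'' are covered by the same short argument.
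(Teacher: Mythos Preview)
Your proposal is correct and matches the paper's approach: the paper simply remarks that the lemma ``follows quickly from the method of labeling edges and the definition of nestings,'' and your case split (nested versus disjoint, with crossing excluded) is exactly the unwinding of those definitions.
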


This follows quickly from the method of labeling edges and the definition of nestings.

Consider a noncrossing matching $M$ with $k$ pairs of nested edges. Given some $i$ with $0 \leq i \leq k$, we define a process by which we rearrange the vertices in a noncrossing matching to obtain a matching with the same LR-sequence and $i$ pairs of nested edges. These matchings will be the representatives of the nesting-similarity classes.

In the definition below, given edges $a,b$ where $1 \leq a,b \leq n$ ($a \neq b$), let $(a,b).M$ denote the matching that results by swapping the left endpoints of edges $a$ and $b$.

\begin{definition}\label{def:swaps}
Let $M$ be a noncrossing matching with $k$ pairs of nested edges, and $nep(M)=\{(a_{1},b_{1}),\newline(a_{2},b_{2}),\ldots,(a_k,b_k)\}$. Define the sequence of matchings $M_0, M_1, M_2, \ldots, M_k$ by $M_0=M$ and $M_i=(a_i,b_i).M_{i-1}$ for all $i \in [k]$. Additionally, let $lperm(M_i)$ denote the order in which the left endpoints of matching $M_i$ appear.
\end{definition}

For an example of this definition, see Figure~\ref{fig:swaps}. It is clear that each $M_i$ has the same LR-sequence as $M$. Our goal is to additionally show that $ne(M_i)=k-i$. This would imply that the matchings in $\{M_0,M_1,\ldots,M_k \mid M_0 \text{ noncrossing}\}$ form a set of representatives of the $k+1$ distinct nesting-similarity classes for matchings with the same LR-sequence as $M$.

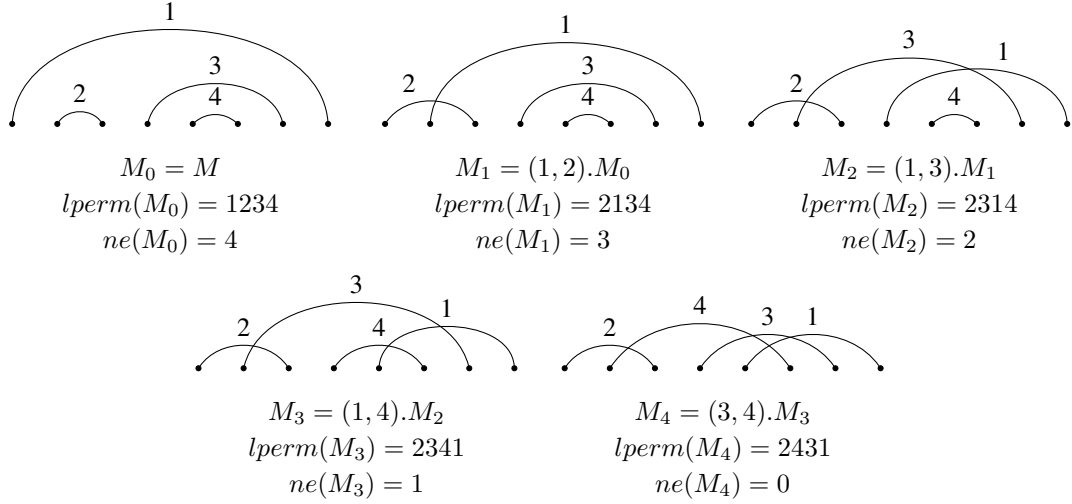
\begin{figure}[ht]
\centering
\begin{tikzpicture}[scale=0.6]
\foreach \i in {0,...,7}
	{
		\node[dot] at (\i,0)(\i){};
	}
	
\draw(0)  to [bend left=85]  node[midway,above]{1} (7);
\draw(1)  to [bend left=55] node[midway,above]{2}(2);
\draw(3)  to [bend left=75] node[midway,above]{3}(6);
\draw(4)  to [bend left=40] node[midway,above]{4}(5);
\draw(3.5,-1)node{$M_0=M$};
\draw(3.5,-1.8)node{$lperm(M_0)=1234$};
\draw(3.5,-2.6)node{$ne(M_0)=4$};
\end{tikzpicture} \hspace{5mm}
\begin{tikzpicture}[scale=0.6]
\foreach \i in {0,...,7}
	{
		\node[dot] at (\i,0)(\i){};
	}
	
\draw(1)  to [bend left=85]  node[midway,above]{1} (7);
\draw(0)  to [bend left=55] node[midway,left=8pt,above]{2}(2);
\draw(3)  to [bend left=75] node[midway,above]{3}(6);
\draw(4)  to [bend left=40] node[midway,above]{4}(5);
\draw(3.5,-1)node{$M_1=(1,2).M_0$};
\draw(3.5,-1.8)node{$lperm(M_1)=2134$};
\draw(3.5,-2.6)node{$ne(M_1)=3$};
\end{tikzpicture}\hspace{5mm}
\begin{tikzpicture}[scale=0.6]
\foreach \i in {0,...,7}
	{
		\node[dot] at (\i,0)(\i){};
	}
	
\draw(3)  to [bend left=85]  node[midway,,right=10pt,above]{1} (7);
\draw(0)  to [bend left=55] node[midway,above]{2}(2);
\draw(1)  to [bend left=75] node[midway,above]{3}(6);
\draw(4)  to [bend left=40] node[midway,above]{4}(5);
\draw(3.5,-1)node{$M_2=(1,3).M_1$};
\draw(3.5,-1.8)node{$lperm(M_2)=2314$};
\draw(3.5,-2.6)node{$ne(M_2)=2$};
\end{tikzpicture}\hspace{5mm}
\begin{tikzpicture}[scale=0.6]
\foreach \i in {0,...,7}
	{
		\node[dot] at (\i,0)(\i){};
	}
	
\draw(4)  to [bend left=85]  node[midway,above]{1} (7);
\draw(0)  to [bend left=55] node[midway,above]{2}(2);
\draw(1)  to [bend left=75] node[midway,above]{3}(6);
\draw(3)  to [bend left=55] node[midway,above]{4}(5);
\draw(3.5,-1)node{$M_3=(1,4).M_2$};
\draw(3.5,-1.8)node{$lperm(M_3)=2341$};
\draw(3.5,-2.6)node{$ne(M_3)=1$};
\end{tikzpicture}\hspace{5mm}
\begin{tikzpicture}[scale=0.6]
\foreach \i in {0,...,7}
	{
		\node[dot] at (\i,0)(\i){};
	}
	
\draw(4)  to [bend left=55]  node[midway,above]{1} (7);
\draw(0)  to [bend left=55] node[midway,above]{2}(2);
\draw(3)  to [bend left=55] node[midway,above]{3}(6);
\draw(1)  to [bend left=55] node[midway,above]{4}(5);
\draw(3.5,-1)node{$M_4=(3,4).M_3$};
\draw(3.5,-1.8)node{$lperm(M_4)=2431$};
\draw(3.5,-2.6)node{$ne(M_4)=0$};
\end{tikzpicture}
\caption{An example of the matching obtained by swapping left endpoints in a noncrossing matching, as in Definition \ref{def:swaps}. Note the $nep(M) = \{(1,2),(1,3),(1,4),(3,4)\}$, which defines the order in which left endpoints are swapped.}
\label{fig:swaps}
\end{figure}

\begin{lemma} \label{lem:adj}
Let $M$ be a noncrossing matching, let $M_0, M_1,\ldots,M_k$ be the sequence of matchings as defined in Definition~\ref{def:swaps}, and let $nep(M)=\{(a_{1},b_{1}),(a_{2},b_{2}),\ldots,(a_k,b_k)\}$. Then for every $i\in[k]$, $a_{i+1}, b_{i+1}$ appear in order and are adjacent in $lperm(M_i)$.
\end{lemma}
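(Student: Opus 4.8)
The plan is to argue by strong induction on $i$, proving for each $i$ with $0 \le i \le k-1$ that the labels $a_{i+1}$ and $b_{i+1}$ occur consecutively and in increasing order in $lperm(M_i)$ (so that the swap $M_{i+1}=(a_{i+1},b_{i+1}).M_i$ is a transposition of two adjacent left endpoints). Write $a=a_{i+1}$, $b=b_{i+1}$, and let $F=\{(a_1,b_1),\ldots,(a_i,b_i)\}$ be the set of nested pairs preceding $(a,b)$ in the lexicographic order on $nep(M)$; explicitly, $F$ consists of every nested pair $(x,y)$ with $y<b$, together with every nested pair $(x,b)$ with $x<a$. The inductive hypothesis is exactly that each of the first $i$ swaps is an adjacent transposition.

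The first step is to turn this hypothesis into a clean description of $lperm(M_i)$. Performing $(a_j,b_j).M_{j-1}$ interchanges the positions of the labels $a_j$ and $b_j$ in $lperm$. When these two labels are adjacent, as the hypothesis guarantees for $j\le i$, the operation is an adjacent transposition and therefore reverses the relative order of the single pair $\{a_j,b_j\}$ while leaving the relative order of every other pair of labels untouched. Since the members of $nep(M)$ are distinct, I would then conclude that for any two labels $x<y$, their order in $lperm(M_i)$ is reversed from their order in $lperm(M_0)=12\cdots n$ if and only if $(x,y)\in F$. In other words, the inversions of $lperm(M_i)$ are precisely the pairs of $F$.

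With this description available, the second step is to show that no label lies strictly between $a$ and $b$ in $lperm(M_i)$; since $(a,b)\notin F$, the labels $a$ and $b$ remain in increasing order, so adjacency completes the induction. For a label $c\neq a,b$ I would read off, purely from the inversion description, whether $a$ precedes $c$ and whether $c$ precedes $b$, splitting into the three cases $a<c<b$, $c<a$, and $c>b$. Two structural facts drive each case to a contradiction. First, because $M$ is noncrossing and $a$ is the outer edge of the nesting $(a,b)$, every edge $c$ with $a<c<b$ must itself be nested inside $a$ (its left endpoint lies strictly inside $a$'s span and it cannot cross $a$), so $(a,c)\in nep(M)$ and, having second coordinate $c<b$, lies in $F$. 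Second, nesting is transitive, so an outer edge $c<a$ of $a$ is also an outer edge of $b$, giving $(c,b)\in nep(M)$ with the same second coordinate $b$ and smaller first coordinate $c<a$, hence $(c,b)\in F$. Feeding these facts into the three cases each time forces a pair to be simultaneously in and not in $F$, which is impossible; thus no label lies between $a$ and $b$.

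The delicate point, and the main obstacle I anticipate, is that $a$ and $b$ need not stay fixed during the first $i$ swaps: either label may be displaced by an earlier transposition in which it serves as an outer or inner edge. Tracking their positions directly is awkward, which is exactly why I would organize the argument around the global inversion set $F$ rather than around explicit positions — once the inversions of $lperm(M_i)$ are identified with $F$, the adjacency of $a$ and $b$ reduces to the finite case check above. The base case $i=0$ is immediate: $lperm(M_0)$ is the identity, and $(a_1,b_1)$ is the lexicographically first nested pair, so minimality of its second coordinate together with the first structural fact forces $b_1=a_1+1$, making $a_1,b_1$ adjacent.
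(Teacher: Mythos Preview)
Your proof is correct and follows the same inductive, three-case strategy as the paper: assume some label $c$ sits between $a_{i+1}$ and $b_{i+1}$ in $lperm(M_i)$, split according to whether $c<a_{i+1}$, $a_{i+1}<c<b_{i+1}$, or $c>b_{i+1}$, and in each case locate a nested pair that is lexicographically smaller than $(a_{i+1},b_{i+1})$ to reach a contradiction. Your one genuine addition is the explicit lemma that, under the inductive hypothesis, the inversion set of $lperm(M_i)$ equals $F=\{(a_1,b_1),\ldots,(a_i,b_i)\}$; the paper uses this implicitly (phrases like ``must have been swapped'' and ``cannot appear in order'') but never states it, and your formulation makes the case analysis cleaner and closes a small looseness in the paper's Case~2, where it is asserted that a particular earlier swap reverses $c$ and $b_{i+1}$ without arguing that subsequent swaps do not undo this.
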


\begin{proof}
We use proof by induction. The result clearly holds for $M_0$. Assume that $a_{j+1},b_{j+1}$ are adjacent and in order in $lperm(M_j)$ for $0 \leq j <i$. First notice that this assumption immediately implies that $a_{i+1},b_{i+1}$ appear in order in $lperm(M_i)$.

To obtain a contradiction, assume that $a_{i+1}, b_{i+1}$ are not adjacent in $lperm(M_i)$. So, there exists some $c$ such that $c$ appears between $a_{i+1},b_{i+1}$. Notice that nestings in $M_0$ that are lexicographically smaller than $(a_{i+1},b_{i+1})$ have had their left endpoints swapped to obtain $M_i$. This fact will allow us to obtain a contradiction. We will consider three cases based on the size of the label $c$.

First assume that $a_{i+1}<b_{i+1}<c$. This implies that $c$ and $b_{i+1}$ appear out of order in $lperm(M_i)$, and must have been swapped. However, $(b_{i+1},c)$ is lexicographically larger than $(a_{i+1},b_{i+1})$, which gives a contradiction.

Next assume that $c<a_{i+1}<b_{i+1}$. In this case, $c$ and $a_{i+1}$ appear out of order in $lperm(M_i)$. The inductive assumption implies that at some step, $(c,a_{i+1})$ must have been swapped and were a nesting in $M_0$. However, this would also imply that $(c,b_{i+1})$ is a nesting in $M_0$ that is lexicographically smaller than $(a_{i+1},b_{i+1})$; applying this swap would reverse the order of $c,b_{i+1}$ as well. So, $c$ could not appear between $a_{i+1}$ and $b_{i+1}$ and we have a contradiction.

Finally assume that $a_{i+1}<c<b_{i+1}$. We need to consider two further subcases dependent on $rperm(M_0)$. If $c$ appears before $b_{i+1}$ in $rperm(M_0)$, then $c$ and $b_{i+1}$ are not nested edges by Lemma~\ref{lem:rperm}. However, $a_{i+1}$ nests both $c$ and $b_{i+1}$ in this case. It follows that $(a_{i+1},c)$ is a lexicographically smaller nesting in $nep(M_0)$ and $c,a_{i+1}$ must have already been swapped. As a result, they cannot appear in order in $M_i$, giving a contradiction.

If instead $b_{i+1}$ appears before $c$ in $rperm(M_0)$, the condition on noncrossing edges in $M_0$ and the fact that $a_{i+1}$ and $b_{i+1}$ are nested implies that $a_{i+1}$ and $c$ are also nested. Again, this is a lexicographically smaller nesting, which implies that $a_{i+1}$ and $c$ must appear out of order in $M_i$, giving a contradiction.
\end{proof}

\begin{lemma} \label{lem:nestings}
Let $M$ be a noncrossing matching, let $M_0, M_1,\ldots,M_k$ be the sequence of matchings as defined in Definition~\ref{def:swaps}, and let $nep(M)=\{(a_{1},b_{1}),(a_{2},b_{2}),\ldots,(a_k,b_k)\}$. Then  $nep(M_i)=\{(a_{i+1},b_{i+1}),\newline(a_{i+2},b_{i+2}),\ldots,(a_k,b_k)\}$; in particular, $ne(M_i)=k-i$.
\end{lemma}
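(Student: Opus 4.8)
The plan is to prove the claim by induction on $i$, using Lemma~\ref{lem:adj} as the key structural input at each step. The base case $i=0$ is immediate, since $M_0=M$ is noncrossing and $nep(M_0)=nep(M)=\{(a_1,b_1),\ldots,(a_k,b_k)\}$, giving $ne(M_0)=k$. For the inductive step, I would assume that $nep(M_{i-1})=\{(a_i,b_i),(a_{i+1},b_{i+1}),\ldots,(a_k,b_k)\}$ and analyze how applying the swap $(a_i,b_i)$ to $M_{i-1}$ changes the set of nested pairs when passing to $M_i$.

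The core of the argument is to show two things: first, that the swap $(a_i,b_i)$ destroys exactly the nesting $(a_i,b_i)$; and second, that it creates no new nestings and destroys no other existing nesting. For the first point, I would use Lemma~\ref{lem:adj}, which guarantees that in $lperm(M_{i-1})$ the left endpoints $a_i$ and $b_i$ appear in order and \emph{adjacent}. Since swapping the left endpoints of two edges whose left endpoints are adjacent only moves each left endpoint past the other, the only pair whose nesting/crossing relationship can possibly change is $(a_i,b_i)$ itself. Because the right endpoints are untouched and $(a_i,b_i)$ was nested in $M_{i-1}$ (it is the lexicographically smallest element of $nep(M_{i-1})$), the swap turns it into a non-nested pair, so exactly one nesting is lost. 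Adjacency is precisely what rules out collateral damage to any third edge, which is why Lemma~\ref{lem:adj} is doing the essential work here.

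I expect the main obstacle to be arguing cleanly that no \emph{other} relationships change, rather than just the relationship between $a_i$ and $b_i$. Concretely, one must verify that for any edge $c\neq a_i,b_i$, the pair $\{c,a_i\}$ and the pair $\{c,b_i\}$ each retain their nesting status after the swap. The adjacency of $a_i$ and $b_i$ in $lperm(M_{i-1})$ means that no left endpoint of a third edge $c$ lies between them, so the left endpoint of $c$ sits either entirely to the left of both or entirely to the right of both; in either case the swap, which merely exchanges the two adjacent positions, does not alter whether $c$'s left endpoint precedes or follows $a_i$'s or $b_i$'s, and since right endpoints are fixed, the nesting relation of $c$ with each of $a_i,b_i$ is preserved. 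Handling this carefully — and confirming that the remaining pairs $(a_{i+1},b_{i+1}),\ldots,(a_k,b_k)$ are therefore exactly the nestings of $M_i$ — completes the step and yields $nep(M_i)=\{(a_{i+1},b_{i+1}),\ldots,(a_k,b_k)\}$, hence $ne(M_i)=k-i$.
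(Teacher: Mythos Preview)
Your proposal is correct and follows essentially the same approach as the paper: induction on $i$, with Lemma~\ref{lem:adj} supplying the adjacency of $a_i,b_i$ in $lperm(M_{i-1})$ so that the swap destroys exactly the nesting $(a_i,b_i)$ and leaves all other pairs unchanged. The only cosmetic difference is that the paper takes $i=1$ as the base case while you take $i=0$, and you spell out in more detail why adjacency prevents any third edge $c$ from changing its relationship with $a_i$ or $b_i$, which the paper simply asserts.
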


\begin{proof}
We use proof by induction on $i$. It is straightforward to show that $nep(M_1)=\{(a_{2},b_{2}),\ldots \\,(a_k,b_k)\}$.

Now assume that $nep(M_{i-1})=\{(a_{i},b_{i}),\ldots,(a_k,b_k)\}$. By definition, we know that $(a_i,b_i)$ is no longer a nesting in $M_i=(a_i,b_i).M_{i-1}$. Additionally, since $(a_i,b_i)$ were adjacent in $lperm(M_{i-1})$ by Lemma~\ref{lem:adj}, we know that $(a_{i+1},b_{i+1}),(a_{i+2},b_{i+2}),\ldots,(a_k,b_k)$ are all still nestings in $M_i$ and that no additional nestings are created by swapping $(a_i,b_i)$. Therefore $nep(M_i)=\{(a_{i+1},b_{i+1}),\\ (a_{i+2},b_{i+2}),\ldots,(a_k,b_k)\}$.
\end{proof}

It follows that the matchings we generate by the swaps in Definition~\ref{def:swaps} form a set of representatives of the $k+1$ distinct nesting-similarity classes for matchings with the same LR-sequence as $M$. Let $\mathcal{NS}_n$ be the set of representatives of the nesting-similarity classes; that is $\mathcal{NS}_n = \{N \in \MM(n) \mid \text{$N=M_i$ for some noncrossing matching $M$ with at least $i$ nestings}\}$.  We can now define the other half of our bijection.

\begin{definition}
Define $\tau: \mathcal{NCN}_n \rightarrow \mathcal{NS}_n$ where, if $M$ is a noncrossing matching with $nep(M)=\{(a_{1},b_{1}),(a_{2},b_{2}),\ldots,(a_k,b_k)\}$, then $\tau(M,a_i,b_i)=M_i$ and $\tau(M,0,0)=M$.
\end{definition}

So, $\tau$ will take the noncrossing matching $M$, which has the maximum number of nestings of any matching with the same LR-sequence, and perform a sequence of left vertex swaps. Each swap converts precisely one nesting pair of edges into a crossing pair of edges; these swaps continue until the associated pair of edges is no longer nested. For an example of the mapping $\tau$, see Figure~\ref{fig:tau}.

\begin{theorem} \label{tau}
The mapping $\tau: \mathcal{NCN}_n \rightarrow \mathcal{NS}_n$ is a bijection. Additionally, if $M \in \mathcal{M}(n)$ is noncrossing, $\tau(M,0,0)=M$.
\end{theorem}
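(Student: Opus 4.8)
The plan is to prove that $\tau$ is a bijection by exhibiting an explicit inverse, mirroring the strategy used for $\phi$ in Theorem~\ref{phi}. The groundwork has essentially been laid by Lemmas~\ref{lem:adj} and~\ref{lem:nestings}: together they guarantee that $\tau$ is well-defined and that each $M_i$ genuinely lies in $\mathcal{NS}_n$ with $ne(M_i)=k-i$. So the real content is to show $\tau$ is both injective and surjective, and I would package both by reconstructing $(M,a_i,b_i)$ from a given representative $N\in\mathcal{NS}_n$.

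First I would handle the trivial part: since a noncrossing matching has its LR-sequence in bijection with itself, $\tau(M,0,0)=M$ recovers the noncrossing case, establishing the final sentence of the statement immediately. For the main claim, I would take an arbitrary $N\in\mathcal{NS}_n$. By the definition of $\mathcal{NS}_n$, $N=M_i$ for \emph{some} noncrossing matching $M$ and some index $i$, but I must show this data is uniquely determined by $N$ in order to get injectivity. The key observation is that $M=nc(N)$, because every $M_j$ shares the LR-sequence of $M$ and $nc$ depends only on the LR-sequence; thus the noncrossing matching $M$ is forced. Once $M$ is pinned down, I compute $nep(M)=\{(a_1,b_1),\ldots,(a_k,b_k)\}$ and recover the index $i$ from the nesting count via Lemma~\ref{lem:nestings}: since $ne(M_i)=k-i$, the integer $i=k-ne(N)$ is uniquely determined. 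This yields the candidate inverse $\tau^{-1}(N)=(M,a_i,b_i)$ with $M=nc(N)$ and $i=k-ne(N)$, and $\tau^{-1}(M)=(M,0,0)$ when $N$ is itself noncrossing.

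To finish, I would verify that these two maps are mutually inverse. In one direction, starting from $(M,a_i,b_i)\in\mathcal{NCN}_n$, applying $\tau$ gives $M_i$, and then $nc(M_i)=M$ (same LR-sequence) together with $k-ne(M_i)=k-(k-i)=i$ recovers the pair $(a_i,b_i)$ as the $i$th element of $nep(M)$, so $\tau^{-1}(\tau(M,a_i,b_i))=(M,a_i,b_i)$. In the other direction, starting from $N=M_i\in\mathcal{NS}_n$, the reconstruction produces $M=nc(N)$ and index $i$, and applying the swap sequence of Definition~\ref{def:swaps} through step $i$ regenerates exactly $N$, since the sequence $M_0,\ldots,M_k$ is completely determined by $M$. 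The one subtlety I would be careful about is confirming that distinct pairs $(M,a_i,b_i)$ and $(M,a_j,b_j)$ with $i\neq j$ cannot map to the same matching; this is precisely guaranteed because $ne(M_i)=k-i$ are distinct for distinct $i$, so no two steps in the swap sequence can coincide.

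I expect the main obstacle to be purely expository rather than mathematical: the heavy lifting is done by Lemma~\ref{lem:nestings}, so the challenge is to state the inverse cleanly and to argue that the map $N\mapsto (nc(N),\,\text{the }(k-ne(N))\text{th pair of }nep(nc(N)))$ is genuinely well-defined—in particular that $k-ne(N)$ always falls in the valid range $\{0,1,\ldots,k\}$, which follows because every representative in $\mathcal{NS}_n$ arises as some $M_i$ with $0\le i\le k$. Once that range condition is noted, injectivity and surjectivity both fall out of the determinism of the swap sequence, and no delicate case analysis (of the sort needed in Lemma~\ref{lem:adj}) is required here.
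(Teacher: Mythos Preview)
Your proposal is correct and follows essentially the same approach as the paper: construct $\tau^{-1}$ by setting $M=nc(N)$ and recovering the index $i$ for which $N=M_i$, with the noncrossing case handled separately. You are in fact more careful than the paper, which does not explicitly invoke Lemma~\ref{lem:nestings} to argue that $i=k-ne(N)$ is uniquely determined, nor verify the two-sided inverse property; your additions are welcome but do not constitute a different route.
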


\begin{proof}
Given some $N \in \mathcal{NS}_n$, consider $M = nc(N)$. If $N=M$ (implying that $N$ is noncrossing), then $\tau^{-1}(N)=(N,0,0)$. Otherwise, by the definition of $\mathcal{NS}_n$, there exists some $i$ such that $N = M_i$. If $nep(M) =\{(a_{1},b_{1}),(a_{2},b_{2}),\ldots,(a_k,b_k)\}$, set $\tau^{-1}(N)=(M,a_i,b_i)$. 
\end{proof}

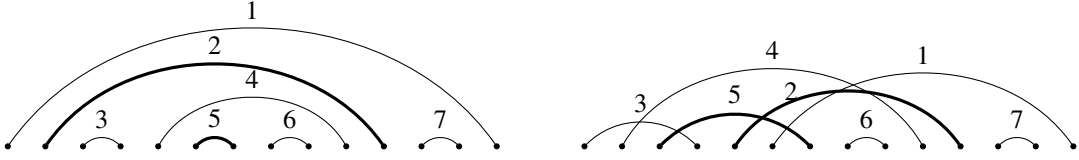
\begin{figure}[ht]
\centering

\begin{tikzpicture}[scale=0.5]
\foreach \i in {0,...,13}
	{
		\node[dot] at (\i,0)(\i){};
	}
	
\draw(0)  to [bend left=55]  node[midway,above]{1} (13);
\draw(1)[very thick]  to [bend left=55] node[midway,above]{2}(10);
\draw(2)  to [bend left=45] node[midway,above]{3}(3);
\draw(4)  to [bend left=60] node[midway,above]{4}(9);
\draw(5)[very thick]  to [bend left=45] node[midway,above]{5}(6);
\draw(7)  to [bend left=45] node[midway,above]{6}(8);
\draw(11)  to [bend left=45] node[midway,above]{7}(12);
\end{tikzpicture}\hspace{1cm}
\begin{tikzpicture}[scale=0.5]
\foreach \i in {0,...,13}
	{
		\node[dot] at (\i,0)(\i){};
	}
	
\draw(5)  to [bend left=55]  node[midway,above]{1} (13);
\draw(4)[very thick]  to [bend left=55] node[midway,left=15pt]{2}(10);
\draw(0)  to [bend left=45] node[midway,above]{3}(3);
\draw(1)  to [bend left=60] node[midway,above]{4}(9);
\draw(2)[very thick]  to [bend left=45] node[midway,above]{5}(6);
\draw(7)  to [bend left=45] node[midway,above]{6}(8);
\draw(11)  to [bend left=45] node[midway,above]{7}(12);
\end{tikzpicture}
\caption{An example of the bijection $\tau$, mapping from the noncrossing matching on the left, with chosen nesting pair $(2,5)$ to the representative of the nesting-similarity class with 5 nestings and LR-sequence LLLRLLRLRRRLRR on the right.}
\label{fig:tau}
\end{figure}

Now, combining Theorems \ref{phi} and \ref{tau}, we immediately obtain our desired result.

\begin{theorem}
The map $\sigma = \tau \circ \phi:\mathcal{LP}_n \rightarrow \mathcal{NS}_n$ is a bijection between L \& P matchings and nesting-similarity classes.
\end{theorem}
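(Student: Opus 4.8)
The plan is essentially trivial once Theorems~\ref{phi} and \ref{tau} are in hand. The final statement asserts that $\sigma = \tau \circ \phi$ is a bijection from $\mathcal{LP}_n$ to $\mathcal{NS}_n$, and I would prove this by the general principle that a composition of bijections is a bijection. First I would recall that Theorem~\ref{phi} establishes $\phi:\mathcal{LP}_n \rightarrow \mathcal{NCN}_n$ is a bijection, and Theorem~\ref{tau} establishes $\tau:\mathcal{NCN}_n \rightarrow \mathcal{NS}_n$ is a bijection. Since the codomain of $\phi$ coincides exactly with the domain of $\tau$, namely $\mathcal{NCN}_n$, the composition $\tau \circ \phi$ is well-defined on all of $\mathcal{LP}_n$ and maps into $\mathcal{NS}_n$.

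The key step is simply invoking the elementary fact that if $f\colon X \to Y$ and $g\colon Y \to Z$ are both bijections, then $g \circ f\colon X \to Z$ is a bijection with inverse $f^{-1}\circ g^{-1}$. Applying this with $f=\phi$, $g=\tau$, $X=\mathcal{LP}_n$, $Y=\mathcal{NCN}_n$, and $Z=\mathcal{NS}_n$ immediately yields that $\sigma$ is a bijection, with explicit inverse $\sigma^{-1}=\phi^{-1}\circ\tau^{-1}$. Both inverses were constructed explicitly in the proofs of the two cited theorems, so one could even write down $\sigma^{-1}$ concretely if a constructive description were desired.

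There is no genuine obstacle here, since all the combinatorial work has already been carried out in establishing the two component bijections; the real content of the paper lies in Theorems~\ref{phi} and \ref{tau}, and this final theorem merely assembles them. The only point deserving a sentence of care is confirming that the intermediate set $\mathcal{NCN}_n$ serving as codomain of $\phi$ and domain of $\tau$ is literally the same set in both theorems, which it is by construction. I would therefore keep the proof to one or two sentences, noting that the result follows immediately from Theorems~\ref{phi} and \ref{tau} together with the fact that the composition of bijections is a bijection.
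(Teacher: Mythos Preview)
Your proposal is correct and matches the paper's approach exactly: the paper simply states that the result follows immediately from combining Theorems~\ref{phi} and \ref{tau}, without even writing a formal proof. Your write-up is, if anything, more detailed than the paper's one-line justification.
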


An example of this composition can be seen in Figure~\ref{fig:WholeBijection}.

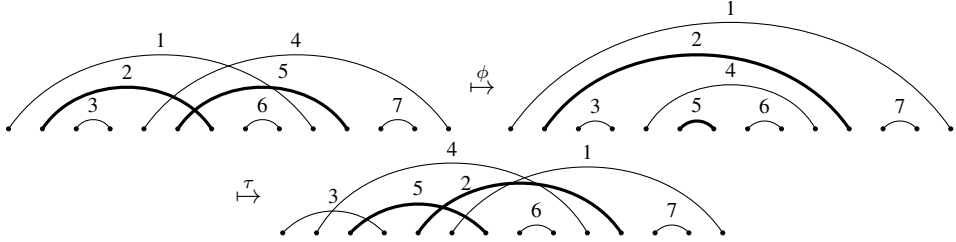
\begin{figure}[ht]
\centering
{\footnotesize
\begin{tikzpicture}[scale=0.45]
\foreach \i in {0,...,13}
	{
		\node[dot] at (\i,0)(\i){};
	}
	
\draw(0)  to [bend left=55]  node[midway,above]{1} (9);
\draw(1)[very thick]  to [bend left=55] node[midway,above]{2}(6);
\draw(2)  to [bend left=55] node[midway,above]{3}(3);
\draw(4)  to [bend left=55] node[midway,above]{4}(13);
\draw(5)[very thick]  to [bend left=55] node[midway,right=8pt, above]{5}(10);
\draw(7)  to [bend left=55] node[midway,above]{6}(8);
\draw(11)  to [bend left=55] node[midway,above]{7}(12);

\draw(14,1.5) node{{\normalsize$\overset{\phi}{\mapsto}$}};
\end{tikzpicture}
\begin{tikzpicture}[scale=0.45]
\foreach \i in {0,...,13}
	{
		\node[dot] at (\i,0)(\i){};
	}
	
\draw(0)  to [bend left=55]  node[midway,above]{1} (13);
\draw(1)[very thick]  to [bend left=55] node[midway,above]{2}(10);
\draw(2)  to [bend left=45] node[midway,above]{3}(3);
\draw(4)  to [bend left=60] node[midway,above]{4}(9);
\draw(5)[very thick]  to [bend left=45] node[midway,above]{5}(6);
\draw(7)  to [bend left=45] node[midway,above]{6}(8);
\draw(11)  to [bend left=45] node[midway,above]{7}(12);
\end{tikzpicture}
\begin{tikzpicture}[scale=0.45]
\foreach \i in {0,...,13}
	{
		\node[dot] at (\i,0)(\i){};
	}
	
\draw(5)  to [bend left=55]  node[midway,above]{1} (13);
\draw(4)[very thick]  to [bend left=55] node[midway,left=15pt]{2}(10);
\draw(0)  to [bend left=45] node[midway,above]{3}(3);
\draw(1)  to [bend left=60] node[midway,above]{4}(9);
\draw(2)[very thick]  to [bend left=45] node[midway,above]{5}(6);
\draw(7)  to [bend left=45] node[midway,above]{6}(8);
\draw(11)  to [bend left=45] node[midway,above]{7}(12);
\draw(-1,1.25) node{{\normalsize$\overset{\tau}{\mapsto}$}};
\end{tikzpicture}
}
\caption{Above is the result of first applying $\phi$ to an L \& P matching to obtain a noncrossing matching with an indicated nesting pair. Then, we see the result of applying $\tau$ to the noncrossing matching with indicated nesting pair to obtain a representative of a nesting equivalence class. Composed, this is the mapping $\sigma$.}
\label{fig:WholeBijection}
\end{figure}

 The images under the map $\sigma$ also form a set of representatives for the nesting similarity classes $\mathcal{NS}_n$. Although Klazar proved that the classes exist, representatives of those classes were not explicitly provided, and now we have done so.

\begin{corollary}
Since the bijection $\sigma$ has an intermediate step at the noncrossing matching associated to a matching M, $\sigma$ has the following properties:

\begin{itemize}
\item{If $M$ is a noncrossing matching, then $\sigma(M) = M$.}
\item{The LR-sequence of M is the LR-sequence of $\sigma(M)$.}
\end{itemize}

\end{corollary}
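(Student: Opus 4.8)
The plan is to obtain both properties by tracing the definitions of $\phi$ and $\tau$ through the intermediate triple in $\mathcal{NCN}_n$; no genuinely new combinatorial argument is needed, since each of the two constituent maps has an explicitly described effect on the data we care about.

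For the first bullet I would argue directly from the definitions. If $M$ is noncrossing, then $\phi(M)=(M,0,0)$ by the definition of $\phi$, and Theorem~\ref{tau} records that $\tau(M,0,0)=M$. Hence $\sigma(M)=\tau(\phi(M))=M$, as required.

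For the second bullet the idea is to follow the LR-sequence separately through $\phi$ and through $\tau$. First I would note that $\phi$ leaves the LR-sequence unchanged: the first coordinate of $\phi(M)$ is $M$ itself when $M$ is noncrossing, and is $nc(M)$ otherwise, and by construction (see Lemma~\ref{lem:LP}) $nc(M)$ is the noncrossing matching sharing the LR-sequence of $M$. So the first coordinate of $\phi(M)$ always has the same LR-sequence as $M$. Next I would check that $\tau$ preserves the LR-sequence. By Definition~\ref{def:swaps}, $\tau$ produces its output $M_i$ from the given noncrossing matching using only operations of the form $(a,b).M_{i-1}$, each of which swaps the \emph{left} endpoints of two edges; such a swap relocates a left endpoint to a position that was already itself a left endpoint, so every entry of the LR-sequence stays the same. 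Iterating over the finitely many swaps shows that $M_i$ shares the LR-sequence of the input noncrossing matching.

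Composing these two facts yields the claim: $\sigma(M)=\tau(\phi(M))$ has the same LR-sequence as the first coordinate of $\phi(M)$, which in turn has the same LR-sequence as $M$. I do not expect any real obstacle here; the only step meriting a moment of care is the observation that a left-endpoint swap fixes the LR-sequence, which reduces to the remark that the swap merely permutes positions already marked $L$ among themselves.
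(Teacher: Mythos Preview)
Your proposal is correct and follows exactly the reasoning the paper intends: the corollary is stated without proof in the paper, with the justification embedded in the phrase ``since the bijection $\sigma$ has an intermediate step at the noncrossing matching,'' and your argument simply unpacks that remark by tracing the LR-sequence through $\phi$ (via $nc(M)$) and through $\tau$ (via left-endpoint swaps). There is nothing to add or correct.
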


Other properties that are sometimes preserved in bijections between matchings, such as number of nestings or number of crossings, are not preserved by $\sigma$. However these statistics all fail to be equidistributed between the two sets $LP_n$ and $NS_n$, so no other bijection exists that preserves them.

In the larger context, we note that none of the other Largest Hairpin Family matchings (LHF, D\&P, R\&G, C\&C) have closed forms for their enumeration sequences. However given that our map $\sigma$ relates an L \& P matching to a noncrossing matching and a nesting edge pair, and the fact that all of these families are constructed inductively, we believe it may be possible to find a similar mapping involving a noncrossing matching and some other matching property. The authors hope to explore this possibility in future work.

The authors would like to thank the organizers of the Dagstuhl Seminar $16071$: Pattern Avoidance and Genome Sorting, which is where the problem was first presented, and where the authors had generous working time to explore the ideas in this paper. The authors also thank the referees for their helpful comments which improved the readability of the paper. 

\bibliographystyle{plain}
\bibliography{RNASS}

\end{document}